\documentclass{amsart}

\usepackage{amssymb}
\usepackage{amsfonts}
\usepackage{amsmath}
\usepackage{graphicx}
\usepackage{shadow}
\usepackage{color}
\usepackage[all]{xy}
\usepackage{mathtools}
\usepackage{amsfonts}
\usepackage[pagebackref]{hyperref}
\newtheorem{thm}{Theorem}[section]

\newtheorem{lemma}[thm]{Lemma}
\newtheorem{proposition}[thm]{Proposition}

\theoremstyle{definition}

\newtheorem{example}[thm]{Example}

\theoremstyle{remark}


\DeclareMathOperator{\id}{id}

\DeclareMathOperator{\m}{\frak{m}}

\DeclareMathOperator{\PP}{\mathcal{P}}

\DeclareMathOperator{\Hom}{Hom}
\DeclareMathOperator{\Ext}{Ext}

\DeclareMathOperator{\tor}{tor}
\DeclareMathOperator{\Tor}{Tor}

\def\pd{{\rm pd}}
\def\fd{{\rm fd}}
\def\wfd{{\rm w-fd}}
\def\id{{\rm id}}
\def\gldim{{\rm gl.dim}}
\def\w-wgldim{{\rm w-w.gl.dim}}
\def\wgldim{{\rm w.gl.dim}}
\def\glwdim{$gl.w.$w$-${\rm dim}}
\def\wwpd{{\rm w}.w$-${\rm pd}}
\def\sup{{\rm sup}}


\everymath{\displaystyle\everymath{}}
\begin{document}
	
	\title[A note on weak $w$-projective modules]{A note on weak $w$-projective modules}

\author[R.A.K. Assaad]{Refat Abdelmawla Khaled Assaad}
\address{Department of Mathematics, Faculty of Science, University Moulay Ismail Meknes, Box 11201, Zitoune, Morocco}
\email{refat90@hotmail.com}


\subjclass[2010]{13D05, 13D07, 13H05}
\keywords{projective modules , weak $w$-projective modules, $w$-flat, $GV$-torsion, finitely presented type, $DW$-rings, coherent rings, $w$-coherent rings}	

	\begin{abstract}
 Let $R$ be a ring. An $R$-module $M$ is said to be a weak $w$-projective module if $\Ext_R^1(M,N)=0$ for all  $N \in \mathcal{P}_{w}^{\dagger_\infty}$ (see, \cite{FLQ}).
 In this paper, we introduce and study some properties of weak $w$-projective modules. And we use these modules to characterize some classical rings, for example, we will prove that a ring $R$ is a $DW$-ring if and only if every weak $w$-projective is projective, $R$ is a Von Neumann regular ring if and only if every FP-projective is weak $w$-projective if and only if every finitely presented $R$-module is weak $w$-projective and $R$ is a $w$-semi-hereditary if and only if every finite type submodule of a free module is weak $w$-projective if and only if every finitely generated ideal of $R$ is a weak $w$-projective.
\end{abstract}
\maketitle
\section{Introduction}
In this paper, all rings are considered commutative with unity and all modules are unital. Let $R$ be a ring and $M$ be an $R$-module. As usual, we use $\pd_R(M)$, $\id_R(M)$ and $\fd_R(M)$ to
denote, respectively, the classical projective dimension, injective dimension and flat dimension of $M$, and $\wgldim(R)$ and $\gldim(R)$ to denote, respectively, the weak and global homological dimensions of $R$.\\
Now, we review some definitions and notation. Let $J$ be an ideal of $R$.
Following \cite{HFX}, $J$ is called a \emph{Glaz-Vasconcelos ideal} (a $GV$-ideal for short) if $J$ is
finitely generated and the natural homomorphism $\varphi : R \rightarrow J^{\ast} = {\rm Hom}_R(J,R)$ is
an isomorphism. Note that the set $GV(R)$ of $GV$-ideals of $R$ is a multiplicative system of ideals of $R$. Let $M$ be an $R$-module. It is Defined
$${\rm tor}_{GV}(M) = \{x\in M \mid Jx = 0\text{  for some } J\in  GV(R)\}.$$
It is clear that $\tor_{GV}(M)$
is submodule of $M$. $M$ is said to be $GV$-torsion (resp., $GV$-torsion-free) if $\tor_{GV}(M) =M$ (resp., $ \tor_{GV}(M) =0$). A $GV$-torsion-free module $M$ is called a $w$-module if ${\rm Ext}^1_R(R/J, M) =0$ for any $J \in GV(R)$. Then, projective modules and reflexive modules are $w$-modules. In the recent paper \cite{HFX}, it was shown that flat modules are $w$-modules. Also it is known that a $GV$-torsion-free $R$-module $M$ is a $w$-module if and only $\Ext^R_1(N,M)=0$ for every
$GV$-torsion $R$-module $N$ (see, \cite{FH1}, Theorem 6.2.7). The notion of $w$-modules was introduced firstly over a domain \cite{FWM2} in the study of Strong Mori domains and was extended to commutative rings with zero divisors in \cite{HFX}. Let $w-Max(R)$ denote the set of $w$-ideals of $R$ maximal among proper integral $w$-ideals of $R$ (maximal $w$-ideals). Following  \cite[Proposition 3.8]{HFX}, every maximal $w$-ideal is prime.  For any
$GV$-torsion free module $M$,
$$ M_{w}:=\{x\in E(M)\mid Jx\subseteq M \text{  for some } J\in  GV(R)\} $$
is a $w$-submodule of $E(M)$ containing $M$ and is called the $w$-envelope of $M$, where $E(M)$ denotes the injective hull  of $M$. It is clear that a $GV$-torsion-free module $M$ is a $w$-module if and only if $M_{w}=M$.\\
Let $M$ and $N$ be $R$-modules and let $f : M \rightarrow N$ be a homomorphism. Following \cite{FW},
$f$ is called a $w$-monomorphism (resp., $w$-epimorphism, $w$-isomorphism) if $f_{\mathfrak{m}} :
M_{\mathfrak{m}}\rightarrow N_{\mathfrak{m}}$ is a monomorphism (resp., an epimorphism, an isomorphism) for all
$\mathfrak{m}\in w-Max(R)$. A sequence $A \rightarrow B  \rightarrow C$  of modules and homomorphisms is called $w$-exact if the sequence $A_{\mathfrak{m}} \rightarrow B_{\mathfrak{m}}  \rightarrow C_{\mathfrak{m}}$ is exact for all $\mathfrak{m}\in w-Max(R)$. An $R$-module $M$ is said to be of finite type   if there exists a finitely generated free $R$-module $F$ and a $w$-epimorphism $g : F\rightarrow M$. Similarly, an $R$-module $M$ is said to be of finitely presented type if there exists a $w$-exact sequence $F_1\rightarrow F_0 \rightarrow M \rightarrow 0$, where $F_1$ and $F_0$ are finitely generated free.\\
In recent years, homological theoretic characterization of $w$-modules has
received attention in several papers the literature (for example see [\cite{FMR}, \cite{FCZ}, \cite{FL}, \cite{FLQ}]). The notion of $w$-projective modules and $w$-flat modules appeared first in \cite{WAN} when $R$ is an integral domain and was extended to an arbitrary commutative ring in [\cite{FHT}, \cite{HF}]. In \cite{FHT}, F. G. Wang and H. Kim generalized projective modules to $w$-projective modules by the $w$-operation. An $R$-module $M$ is said to be a $w$-projevtive if $\Ext^1_R(L(M),N)$ is $GV$-torsion for any torsion-free $w$-module $N$, where $L(M)=(M/\tor_{GV}(M))_w$. Denote by  $\mathcal{P}_{w}$ the class of all $w$-projective $R$-modules. Following \cite{WLQ}, an $R$-module $M$ is a $w$-split if and only if  $\Ext_R^1(M,N)$ is $GV$-torsion for all $R$-modules $N$. Denote by $\mathcal{S}_{w}$ the class of all $w$-split $R$-modules. Hence, by [\cite{Wang and Lie}, Corollary 2.4], every $w$-split module is $w$-projective.
Following \cite{HF}, an $R$-module $M$ is said to be $w$-flat if for any $w$-monomorphism $f:A \rightarrow B$, the induced sequence $1\otimes f: M \otimes_R A \rightarrow M \otimes_R B$ is a $w$-monomorphism. Denote by $\mathcal{F}_{w}$ the class of all $w$-flat $R$-modules. Following \cite{FLQ}, throughout this paper, $\mathcal{P}_{w}^{\dagger_\infty}$ denote the class of $GV$-torsion-free $R$-modules $N$ with the property that $\Ext^k_R(M,N)=0$ for all $w$-projective $R$-modules $M$ and for all integers $k\geq1$ Clearly, every $GV$-torsionfree injective $R$-module belongs to $\mathcal{P}_{w}^{\dagger_\infty}$. An $R$-module $M$ is said to be weak $w$-projective if $\Ext^1_R(M, N)=0$ for all $N \in \mathcal{P}_{w}^{\dagger_\infty}$: Denote by w$\mathcal{P}_w$ the class of all weak $w$-projective modules. Following \cite{FLQ},  Wang and Qiao introduce the notions of the weak $w$-projective dimension ($\wwpd$) of a module and the global weak $w$-projective dimension ($\glwdim$) of a ring.
 Following \cite{FLQ}, a $GV$-torsion-free module $M$ is said to be a strong w-module if $\Ext^i_R(N,M)=0$for any integer $i \geq1$ and all $GV$-torsion modules $N$. Denote by $\mathcal{W}_{\infty}$ the class of all strong $w$-modules. Then all $GV$-torsion-free injective modules are strong $w$-modules. Clearly, $\mathcal{P}_{w}^{\dagger_\infty}\subseteq \mathcal{W}_{\infty}$. But, in \cite{FLQ}, they do not showed that $\mathcal{P}_{w}^{\dagger_\infty}$ and $\mathcal{W}_{\infty}$ are the different class of $R$-modules, and this question was answered in \cite{Pu wang}.\\
 Recall from \cite{LMND} that an $R$-module $M$ is called FP-projective if $\Ext^1_R(M,N)=0$ for any absolutely pure $R$-module $N$. Denote by $\mathcal{FP}$ the class of all FP-projective modules.
Recall that an $R$-module $A$ is an absolutely pure if $A$ is a pure submodule in every $R$-module which contains $A$ as a submodule (see, \cite{BHM}).  C. Megibbeni showed in \cite{CM}, that an $R$-module $A$ is absolutely pure if and only if $\Ext_R^1(F,A)=0$, for every finitely presented module $F$. Hence, an absolutely pure module is precisely a FP-injective module in \cite{BS}.
	\section{results}
	In this section, we introduce a characterize of some classical ring. But we need the following lemma
\begin{lemma}(\cite{FLQ}, Proposition 2.5)\label{lemma}
An $R$-module $M$ is weak $w$-projective if $\Ext_R^1(M,N)=0$ for all  $N \in \mathcal{P}_{w}^{\dagger_\infty}$ and for all $k\geq0$.
\end{lemma}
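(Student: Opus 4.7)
The plan is to run an induction on $k$, with $k=1$ being the definition of weak $w$-projectivity and higher $k$ obtained by dimension shifting against an injective resolution of $N$. The crux is to show that $\mathcal{P}_w^{\dagger_\infty}$ is closed under the cokernel operation $N\mapsto E(N)/N$, where $E(N)$ is the injective hull of $N$, so that the isomorphism $\Ext^{k+1}_R(M,N)\cong \Ext^k_R(M,E(N)/N)$ stays inside the class and can be iterated.

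For the first half of this closure, if $N\in\mathcal{P}_w^{\dagger_\infty}$ then $N$ is $GV$-torsion-free, hence its injective hull $E(N)$ is also $GV$-torsion-free (any $GV$-torsion element of $E(N)$ would meet $N$ essentially). Thus $E(N)$ is a $GV$-torsion-free injective module, and the excerpt records that every such module lies in $\mathcal{P}_w^{\dagger_\infty}$.

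The more delicate half is to show $E(N)/N\in\mathcal{P}_w^{\dagger_\infty}$. The vanishing $\Ext^i_R(M',E(N)/N)=0$ for $i\geq 1$ and every $w$-projective $M'$ is immediate from the long exact sequence attached to $0\to N\to E(N)\to E(N)/N\to 0$, since both $N$ and $E(N)$ already lie in $\mathcal{P}_w^{\dagger_\infty}$. What really has to be checked is that $E(N)/N$ is $GV$-torsion-free. For this, apply $\Hom_R(R/J,-)$ to the same short exact sequence for an arbitrary $J\in GV(R)$: the outer terms $\Hom_R(R/J,N)$ and $\Hom_R(R/J,E(N))$ vanish because $N$ and $E(N)$ are $GV$-torsion-free, while $\Ext^1_R(R/J,N)=0$ because the containment $\mathcal{P}_w^{\dagger_\infty}\subseteq \mathcal{W}_\infty$ makes $N$ a strong $w$-module (hence in particular a $w$-module). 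The long exact sequence then forces $\Hom_R(R/J,E(N)/N)=0$ for every $J\in GV(R)$, which says exactly that $E(N)/N$ carries no $GV$-torsion. This step is where the inclusion $\mathcal{P}_w^{\dagger_\infty}\subseteq \mathcal{W}_\infty$ is essentially used and is the main obstacle of the argument.

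With the class closed under $N\mapsto E(N)/N$, the induction finishes immediately. The base case $k=1$ is the definition of weak $w$-projectivity. For the step, assume $\Ext^k_R(M,N')=0$ for every $N'\in\mathcal{P}_w^{\dagger_\infty}$; then dimension shifting along $0\to N\to E(N)\to E(N)/N\to 0$ gives $\Ext^{k+1}_R(M,N)\cong \Ext^k_R(M,E(N)/N)$, and the right-hand side vanishes by the induction hypothesis since $E(N)/N\in\mathcal{P}_w^{\dagger_\infty}$.
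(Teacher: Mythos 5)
Your argument is correct: the dimension-shifting induction along $0\to N\to E(N)\to E(N)/N\to 0$, together with the verification that $\mathcal{P}_w^{\dagger_\infty}$ is closed under the cosyzygy operation (including the key check that $E(N)/N$ is $GV$-torsion-free), is the standard and intended route; the paper itself supplies no proof, since the lemma is quoted verbatim from [\cite{FLQ}, Proposition~2.5]. The only remark worth adding is that the appeal to $\mathcal{P}_w^{\dagger_\infty}\subseteq\mathcal{W}_\infty$ can be bypassed: $R/J$ is $GV$-torsion, hence $w$-projective, so $\Ext^1_R(R/J,N)=0$ already follows from the definition of $\mathcal{P}_w^{\dagger_\infty}$.
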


 It is obvious that, for the class of modules\\
$\left\{\text{ projective }\right\}\subseteq \left\{\text{ $w$-split }\right\}\subseteq\left\{\text{ $w$-proective }\right\} \subseteq \left\{\text{weak $w$-proective }\right\} \subseteq \left\{\text{ $w$-flat }\right\}$.\\
By [\cite{FMR}, Proposition 2.5], if $R$ is a perfect ring, then the five classes of modules above coincide.

In the following proposition, we will give some characterizations of weak $w$-projective modules.
	\begin{proposition}\label{prop1}
		Let $M$ be an $R$-module. Then the following are equivalent:
		\begin{enumerate}
			\item $M$ is  weak $w$-projective.
			\item $M\otimes F$ is  weak $w$-projective for any projective $R$-module $F$.
			\item $\Hom_R(F,M)$ is  weak $w$-projective for any finitely generated projective $R$-module $F$.
			\item For any exact sequence of $R$-modules
			$\xymatrix
			{ 0\rightarrow A\rightarrow B\rightarrow C\rightarrow 0 }
			$\ with $A\in\mathcal{P}_{w}^{\dagger_\infty}$, the sequence
			$\xymatrix
			{ 0\rightarrow \Hom_R(M,A)\rightarrow \Hom_R(M,B)\rightarrow \Hom_R(M,C)\rightarrow 0}$ \ is exact.
			\item For any $w$-exact sequence of $R$-modules
			$\xymatrix
			{ 0\rightarrow L\rightarrow E\rightarrow M\rightarrow 0 }
			$\, the sequence
			$\xymatrix
			{ 0\rightarrow \Hom_R(M,N)\rightarrow \Hom_R(E,N)\rightarrow \Hom_R(L,N)\rightarrow 0}$ \ is exact for any $R$-module $N\in\mathcal{P}_{w}^{\dagger_\infty}$.
			\item For any exact sequence of $R$-modules
			$\xymatrix
			{ 0\rightarrow L\rightarrow E\rightarrow M\rightarrow 0 }
			$\, the sequence
			$\xymatrix
			{ 0\rightarrow \Hom_R(M,N)\rightarrow \Hom_R(E,N)\rightarrow \Hom_R(L,N)\rightarrow 0}$ \ is exact for any $R$-module $N\in\mathcal{P}_{w}^{\dagger_\infty}$.
		\end{enumerate}
	\end{proposition}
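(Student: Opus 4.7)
The plan is to split the six equivalences into two groups: the summand-type equivalences $(1) \Leftrightarrow (2) \Leftrightarrow (3)$, and the $\Ext$-vanishing equivalences $(1) \Leftrightarrow (4), (5), (6)$.

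For $(1) \Leftrightarrow (2)$ and $(1) \Leftrightarrow (3)$, the forward directions exploit that any projective $F$ is a direct summand of a free module. Thus $M \otimes_R F$ is a direct summand of a direct sum of copies of $M$, and $\Hom_R(F, M)$ (for $F$ finitely generated projective) is a direct summand of $M^n$. Since $\Ext^1_R(-, N)$ converts direct sums in the first variable into products and passes to direct summands, the defining vanishing $\Ext^1_R(M, N) = 0$ for $N \in \mathcal{P}_w^{\dagger_\infty}$ yields $\Ext^1_R(M \otimes_R F, N) = 0$ and $\Ext^1_R(\Hom_R(F, M), N) = 0$. The reverse directions follow by specializing $F = R$ and using $M \otimes_R R \cong M \cong \Hom_R(R, M)$.

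For the second group, $(1) \Rightarrow (4)$ and $(1) \Rightarrow (6)$ are immediate from the long exact sequence of $\Ext^*_R(M, -)$ applied to the given short exact sequence, combined with $\Ext^1_R(M, N) = 0$. For $(4) \Rightarrow (1)$, given $A \in \mathcal{P}_w^{\dagger_\infty}$, I would embed $A$ into its injective hull $E(A)$; since $A$ is $GV$-torsion-free, so is $E(A)$, and the excerpt records that every $GV$-torsion-free injective lies in $\mathcal{P}_w^{\dagger_\infty}$. Applying $(4)$ to $0 \to A \to E(A) \to E(A)/A \to 0$, together with $\Ext^1_R(M, E(A)) = 0$, forces $\Ext^1_R(M, A) = 0$. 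For $(6) \Rightarrow (1)$, apply $(6)$ to a free presentation $0 \to L \to F \to M \to 0$ and use $\Ext^1_R(F, N) = 0$. The implication $(5) \Rightarrow (6)$ is trivial since every exact sequence is $w$-exact.

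The one delicate direction is $(1) \Rightarrow (5)$, because $0 \to L \to E \to M \to 0$ is only $w$-exact. Here the plan is to use that every $N \in \mathcal{P}_w^{\dagger_\infty}$ is a strong $w$-module, so $\Hom_R(T, N) = 0 = \Ext^1_R(T, N)$ for every $GV$-torsion module $T$. Setting $L' = \Ima(L \to E)$, $K = \Ker(E \to M)$, and $I = \Ima(E \to M)$, $w$-exactness makes $\Ker(L \to E)$, $K/L'$, and $M/I$ all $GV$-torsion. Splitting the $w$-exact sequence into the genuine short exact sequences $0 \to L' \to E \to E/L' \to 0$, $0 \to K/L' \to E/L' \to I \to 0$, and $0 \to I \to M \to M/I \to 0$, then applying $\Hom_R(-, N)$ and killing the $GV$-torsion contributions in the resulting long exact sequences, yields the identifications $\Hom_R(L, N) \cong \Hom_R(L', N)$, $\Hom_R(M, N) \cong \Hom_R(E/L', N)$, and $\Ext^1_R(M, N) \cong \Ext^1_R(E/L', N)$; the last is zero by $(1)$. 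Plugging these into the long exact sequence attached to $0 \to L' \to E \to E/L' \to 0$ produces the desired three-term exact sequence, which is the main obstacle to finishing the proof.
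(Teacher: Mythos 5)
Your proposal is correct, and it reaches the same conclusions as the paper while being noticeably more self-contained in two places. For $(1)\Leftrightarrow(2)$ and $(1)\Leftrightarrow(3)$ the paper invokes the isomorphisms $\Ext_R^1(F\otimes M,N)\cong \Hom_R(F,\Ext_R^1(M,N))$ and $F\otimes \Ext_R^1(M,N)\cong \Ext_R^1(\Hom_R(F,M),N)$ from \cite{FH1}, whereas you reduce to the free case and use only that $\Ext_R^1(-,N)$ turns direct sums into products and passes to direct summands; both work, and yours trades two citations for a standard summand argument. The more substantive divergence is $(1)\Rightarrow(5)$: the paper quotes \cite{FLQ}, Lemma 2.1 as a black box to produce the four-term exact sequence $0\rightarrow \Hom_R(M,N)\rightarrow \Hom_R(E,N)\rightarrow \Hom_R(L,N)\rightarrow \Ext_R^1(M,N)$ from the $w$-exact sequence, while you in effect reprove that lemma by splitting the $w$-exact sequence into genuine short exact sequences whose error terms $\Ker(L\to E)$, $K/L'$, $M/I$ are $GV$-torsion, and then killing $\Hom_R(T,N)$ and $\Ext_R^i(T,N)$ for $GV$-torsion $T$ against the strong $w$-module $N$ (note you need $\Ext_R^2(M/I,N)=0$ as well, which the definition of $\mathcal{W}_{\infty}$ supplies). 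That computation is sound, including the implicit check that the resulting identifications are compatible with $g^{*}$ and $f^{*}$; the one point worth flagging is that writing $K/L'$ presupposes the $w$-exact sequence is a complex, whereas the paper's definition of $w$-exactness only forces $\Ima(g\circ f)$ to be $GV$-torsion, so you should either assume $g\circ f=0$ or say a word about why the argument survives without it. The remaining implications --- $(1)\Leftrightarrow(4)$ via an injective (co)resolution of $A$, $(6)\Rightarrow(1)$ via a projective presentation, and $(5)\Rightarrow(6)$ trivially --- coincide with the paper's proof.
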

	\begin{proof}
	$(1)\Rightarrow (2)$. Let $F$ be a projective $R$-module. For any $R$-module $N$ in $\PP_{w}^{\dagger_\infty}$, we have $\Ext_R^1(F\otimes M,N)\cong \Hom_R(F,\Ext_R^1(M,N))$ by [\cite{FH1}, Theorem 3.3.10]. Since $M$ is a weak $w$-projective, $\Ext_R^1(M,N)=0$. Thus, $\Ext_R^1(F\otimes M,N)=0$. Hence, $F\otimes M$ is a weak $w$-projective.\\
	$(2)\Rightarrow (1)$ and $(3)\Rightarrow (1)$. Follow by letting $F=R$.\\
	$(1)\Rightarrow (3)$. Let $N\in \mathcal{P}_{w}^{\dagger_\infty}$, for any finitely generated projective $R$-module $F$, we have $F\otimes \Ext_R^1(M,N)\cong \Ext_R^1(\Hom_R(F,M),N)$ by  [\cite{FH1}, Theorem 3.3.12]. Since $M$ is weak $w$-projective, so $\Ext_R^1(M,N)=0$. Hence, $\Ext_R^1(\Hom_R(F,M),N)=0$, which implies that $\Hom_R(F,M)$ is weak $w$-projective.\\
	$(1)\Rightarrow (4)$. Let
	$ 0\rightarrow A\rightarrow B\rightarrow C\rightarrow 0$ \ be an exact sequence with $A\in\mathcal{P}_{w}^{\dagger_\infty}$, then we have the exact sequence
	$0\rightarrow \Hom_R(M,A)\rightarrow \Hom_R(M,B)\rightarrow \Hom_R(M,C)\rightarrow \Ext_R^1(M,A)$. Since $M$ is weak $w$-projective and $A\in\mathcal{P}_{w}^{\dagger_\infty}$, so $\Ext_R^1(M,A)=0$. Thus, $0\rightarrow \Hom_R(M,A)\rightarrow \Hom_R(M,B)\rightarrow \Hom_R(M,C)\rightarrow 0$ is exact.\\
	$(4)\Rightarrow (1)$. Let $N\in\mathcal{P}_{w}^{\dagger_\infty}$, consdier an exact sequence
	  $ 0\rightarrow N\rightarrow E\rightarrow L\rightarrow 0$
	with $E$ is injective module, then we have the exact sequence
	$0\rightarrow \Hom_R(M,N)\rightarrow \Hom_R(M,E)\rightarrow \Hom_R(M,L)\rightarrow \Ext_R^1(M,N)\rightarrow 0$, and keeping in mind that $0\rightarrow \Hom_R(M,N)\rightarrow \Hom_R(M,E)\rightarrow \Hom_R(M,L)\rightarrow 0$ is exact, we deduce that $\Ext_R^1(M,N)=0$. Hence, $M$ is weak $w$-projective.\\
	$(1)\Rightarrow(5)$. Let
	$ 0\rightarrow L\rightarrow E\rightarrow M\rightarrow 0$ \ be a  $w$-exact sequence. For any $R$-module $N\in\mathcal{P}_{w}^{\dagger_\infty}$ so $N\in \mathcal{W}_{\infty}$. By [\cite{FLQ}, Lemma 2.1], we have the exact sequence $0\rightarrow \Hom_R(M,N)\rightarrow \Hom_R(E,N)\rightarrow \Hom_R(L,N)\rightarrow \Ext_R^1(M,N)$. Since $M$ is weak $w$-projective, so $\Ext_R^1(M,N)=0$, and $(5)$ is holds.\\
	$(5)\Rightarrow(6)$. Trivial.\\
	$(6)\Rightarrow(1)$. Let
	$ 0\rightarrow L\rightarrow E\rightarrow M\rightarrow 0$ be an exact sequence with $E$ is projective. Hence, for any $R$-module $N\in\mathcal{P}_{w}^{\dagger_\infty}$, we have
	$0\rightarrow \Hom_R(M,N)\rightarrow \Hom_R(E,N)\rightarrow \Hom_R(L,N)\rightarrow \Ext_R^1(M,N)\rightarrow 0$ is exact sequence,
	    and keeping in mind that  $0\rightarrow \Hom_R(M,N)\rightarrow \Hom_R(E,N)\rightarrow \Hom_R(L,N)\rightarrow 0$ is exact, we deduce that $\Ext_R^1(M,N)=0$, which implies that $M$ is weak $w$-projective.
\end{proof}
Recall from \cite{FW}, that a ring is said to be $w$-coherent if every finitely generated ideal of $R$ is of finitely presented type.
\begin{proposition}\label{propp}
	Let $R$ be a $w$-coherent ring, $E$ be an injective $R$-module, $M$ be a finitely presented type and $N$ be an $R\{x\}$-module. Then, if $M$ is weak $w$-projective $R$-module, so $\Tor_n^R(M,\Hom(N,E))=0$.
\end{proposition}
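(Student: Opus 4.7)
The plan is to convert the $\Tor$ into an $\Ext$ using the injectivity of $E$, then invoke the weak $w$-projectivity of $M$. Since $R$ is $w$-coherent and $M$ is of finitely presented type, $M$ admits a resolution by finitely generated projective modules (even if only $w$-exact). For any finitely generated projective $P$, the standard duality $P \otimes_R \Hom_R(N,E) \cong \Hom_R(\Hom_R(P,N),E)$, combined with the exactness of $\Hom_R(-,E)$ coming from injectivity of $E$, should yield the natural isomorphism
\[
\Tor_n^R(M, \Hom_R(N,E)) \;\cong\; \Hom_R(\Ext^n_R(M,N), E)
\]
for every $n \geq 1$. This reduces the proposition to showing $\Ext^n_R(M,N) = 0$ for all $n \geq 1$.

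The next step is to verify that $N \in \mathcal{P}_w^{\dagger_\infty}$. For $GV$-torsion-freeness: if $J \in GV(R)$ and $Jn = 0$ for $n \in N$, then because every $GV$-ideal becomes the unit ideal in $R\{x\}$, one has $n = 1\cdot n = 0$. For the required $\Ext$-vanishing against $w$-projective modules, I would use that $R\{x\}$ is $R$-flat together with the fact that for a $w$-projective $R$-module $P$, the extension $P \otimes_R R\{x\}$ is projective over $R\{x\}$; change of rings then gives $\Ext^k_R(P,N) \cong \Ext^k_{R\{x\}}(P \otimes_R R\{x\}, N) = 0$ for all $k \geq 1$, placing $N$ in $\mathcal{P}_w^{\dagger_\infty}$.

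Combining these, since $M$ is weak $w$-projective and $N \in \mathcal{P}_w^{\dagger_\infty}$, Lemma \ref{lemma} yields $\Ext^n_R(M,N) = 0$ for all $n \geq 1$. Substituting into the isomorphism of the first step gives $\Tor_n^R(M, \Hom_R(N,E)) \cong \Hom_R(0,E) = 0$, as desired.

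The main obstacle is making the first step rigorous: over a $w$-coherent ring, a module of finitely presented type generally admits only a $w$-exact (rather than $R$-exact) resolution by finitely generated free modules, so the duality isomorphism has to be either derived up to $GV$-torsion and then cleaned up at the end using the $GV$-torsion-free nature of $\Hom_R(N,E)$, or established via an honest $FP_\infty$-style resolution extracted from $w$-coherence. A secondary subtlety is justifying cleanly that $P \otimes_R R\{x\}$ is projective over $R\{x\}$ for every $w$-projective $P$; this is standard in the Wang--Kim machinery but must be cited precisely from the literature.
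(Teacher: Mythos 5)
Your proposal follows essentially the same route as the paper: reduce $\Tor_n^R(M,\Hom(N,E))$ to $\Hom(\Ext^n_R(M,N),E)$ and then kill $\Ext^n_R(M,N)$ by showing that the $R\{x\}$-module $N$ lies in $\mathcal{P}_{w}^{\dagger_\infty}$ and invoking weak $w$-projectivity of $M$. The two steps you labour to establish by hand are exactly the results the paper cites --- the duality isomorphism is [Wang--Kim, Proposition 2.13(6)] (which is where the $w$-coherence and finitely-presented-type hypotheses are used), and the membership of every $R\{x\}$-module in $\mathcal{P}_{w}^{\dagger_\infty}$ is [FLQ, Proposition 2.4] --- so the ``main obstacle'' you flag is handled by citation rather than by a different argument.
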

\begin{proof} Let $M$ be a weak $w$-projective $R$-module and let $N$ be an $R\{x\}$-modul, so $\Ext_R^n (M,n)=0$ by [\cite{FLQ}, Proposition 2.5] and since every $R\{x\}$-module in $\mathcal{P}_{w}^{\dagger_\infty}$ by [\cite{FLQ}, Proposition 2.4]. Henace, by [\cite{wang and Kim}, Proposition 2.13(6)], we have $$\Tor^R_n(M,\Hom(N,E))\cong \Hom(\Ext^n_R(M,N),E)=0.$$ Which implies that,  $\Tor^R_n(M,\Hom(N,E))=0$
\end{proof}
\begin{proposition}\label{lema4}
	Every weak $w$-projective of finite type is of finitely presented type.
\end{proposition}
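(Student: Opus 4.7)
The plan is to unpack both definitions and reduce the proposition to a structural claim about the kernel of a finite-rank presentation. Let $M$ be weak $w$-projective of finite type. By the definition of finite type, fix a $w$-epimorphism $g\colon F_0\to M$ with $F_0$ finitely generated free. To conclude that $M$ is of finitely presented type I need a finitely generated free $F_1$ making $F_1\to F_0\to M\to 0$ $w$-exact; equivalently, $\ker(g)$ must itself be of finite type.

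My first step is a cosmetic reduction to an honest surjection. Setting $L := g(F_0)\subseteq M$, the quotient $M/L$ is $GV$-torsion and $L\hookrightarrow M$ is a $w$-isomorphism. Because $\mathcal{P}_w^{\dagger_\infty}\subseteq\mathcal{W}_\infty$, for any $N\in\mathcal{P}_w^{\dagger_\infty}$ we have $\Ext^i_R(M/L,N)=0$ for $i\ge 1$, so the long exact sequence attached to $0\to L\to M\to M/L\to 0$ forces $\Ext^1_R(L,N)=0$; hence $L$ is itself weak $w$-projective. It then suffices to prove the result for $L$, and I may assume $0\to K\to F_0\to M\to 0$ is an honest short exact sequence.

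Now I deploy the lifting characterization of Proposition \ref{prop1}(6): for every $N\in\mathcal{P}_w^{\dagger_\infty}$, the restriction $\Hom_R(F_0,N)\twoheadrightarrow\Hom_R(K,N)$ is surjective. Since $K\subseteq F_0$ is $GV$-torsion-free, its injective hull $E(K)$ is a $GV$-torsion-free injective and thus lies in $\mathcal{P}_w^{\dagger_\infty}$, so the inclusion $K\hookrightarrow E(K)$ extends to some $h\colon F_0\to E(K)$. The image $h(F_0)$ is a finitely generated submodule of $E(K)$ containing $K$. Granting $h(F_0)\subseteq K_w$ and writing $e_1,\dots,e_n$ for a basis of $F_0$, each $h(e_i)\in K_w$ is killed modulo $K$ by some $J_i\in GV(R)$; taking $J := J_1\cdots J_n\in GV(R)$ and $K_0 := \sum_i J\cdot h(e_i)\subseteq K$, one checks that every $x = \sum r_i h(e_i)\in K$ satisfies $Jx\subseteq K_0$, so $K/K_0$ is $GV$-torsion, $K_0$ is finitely generated, and $K$ is of finite type.

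The main obstacle is therefore this confinement of $h(F_0)$ into $K_w$, equivalently the vanishing of the induced map $\bar h\colon M\to E(K)/K_w$. The quotient $E(K)/K_w$ is $GV$-torsion-free by construction of the $w$-envelope, but it is not injective, so Proposition \ref{prop1}(6) does not apply to it verbatim. My proposal is to embed $E(K)/K_w$ into a $GV$-torsion-free injective $I\in\mathcal{P}_w^{\dagger_\infty}$ and re-apply the lifting property to $I$, so as to adjust $h$ by an element of $\Hom_R(M,E(K))$ and force $\bar h = 0$. This $w$-theoretic adjustment is where the real content of the proposition sits; once it is in place, the rest of the argument is formal bookkeeping with finite generators and $GV$-ideals.
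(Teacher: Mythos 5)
Your argument has a genuine gap, and it sits exactly where you flag it: the claim that the extension $h\colon F_0\to E(K)$ of the inclusion $K\hookrightarrow E(K)$ can be adjusted so that $h(F_0)\subseteq K_w$. Writing $\bar h\colon L\to E(K)/K_w$ for the induced map, the freedom you have is to modify $h$ by a map factoring through $L$, so $\bar h$ can be killed precisely when it lifts along $E(K)\to E(K)/K_w$; from the exact sequence $\Hom_R(L,E(K))\to\Hom_R(L,E(K)/K_w)\to\Ext^1_R(L,K_w)\to\Ext^1_R(L,E(K))=0$, the obstruction lives in $\Ext^1_R(L,K_w)$. But $K_w$ is only a $w$-module: it is not known to lie in $\mathcal{P}_{w}^{\dagger_\infty}$ (recall that $\mathcal{P}_{w}^{\dagger_\infty}\subseteq\mathcal{W}_{\infty}$ is in general a proper inclusion, and $\mathcal{W}_{\infty}$ is in turn smaller than the class of $w$-modules), so weak $w$-projectivity of $L$ gives no vanishing of $\Ext^1_R(L,K_w)$. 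Your proposed repair --- embedding $E(K)/K_w$ into a $GV$-torsion-free injective $I$ and ``re-applying the lifting property'' --- does not address this: the lifting property of Proposition \ref{prop1}(6) extends maps \emph{out of $K$} to $F_0$ against targets in $\mathcal{P}_{w}^{\dagger_\infty}$, whereas here you need to lift a map \emph{out of $L$} against the non-injective, non-$\mathcal{P}_{w}^{\dagger_\infty}$ target $K_w$. The surrounding bookkeeping (the reduction from $M$ to $L=g(F_0)$ via the long exact sequence and $\mathcal{P}_{w}^{\dagger_\infty}\subseteq\mathcal{W}_{\infty}$, and the deduction that $K$ is of finite type once $h(F_0)\subseteq K_w$) is correct, but the central step remains unproved.

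For comparison, the paper does not argue from the definitions at all: it first upgrades a weak $w$-projective module of finite type to a $w$-projective module of finite type via [\cite{FLQ}, Corollary 2.9], and then invokes [\cite{FH1}, Theorem 6.7.22], which asserts that finite type $w$-projective modules are of finitely presented type. Your missing step is essentially the content of that cited theorem (a Kaplansky-type local freeness statement at maximal $w$-ideals), so if you want a self-contained proof you should either route through those two results or reprove the local structure of finite type $w$-projective modules, rather than attempting the $K_w$-confinement directly.
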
	
\begin{proof} Let $M$ be a weak $w$-projective $R$-module of finite type, so by [\cite{FLQ}, Corollary 2.9] $M$ is $w$-projective of finite type. Thus, by [\cite{FH1}, Theorem 6.7.22], we have $M$ is finitely presented type.
\end{proof}

\begin{proposition}\label{propo5}
	Let $M$ be a $GV$-torsion-free module. The following assertions hold.
	\begin{enumerate}
		\item $M_w/M$ is a weak $w$-projective module.
		\item $M$ is a weak $w$-projective if and only if  so is $M_w$.
	\end{enumerate}
\end{proposition}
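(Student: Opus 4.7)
The plan rests on two standard facts already in the paper: (i) for any $GV$-torsion-free module $M$, the quotient $M_w/M$ is $GV$-torsion (immediate from the definition $M_w=\{x\in E(M)\mid Jx\subseteq M\text{ for some }J\in GV(R)\}$, since $J(x+M)=0$); and (ii) every $N\in\mathcal{P}_w^{\dagger_\infty}$ is a strong $w$-module, i.e.\ belongs to $\mathcal{W}_\infty$, so $\Ext^i_R(T,N)=0$ for all $GV$-torsion $T$ and all $i\geq 1$. These two facts together immediately yield the vanishing of $\Ext^i_R(M_w/M,N)$ for every $N\in\mathcal{P}_w^{\dagger_\infty}$ and every $i\geq 1$.

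For assertion (1), I would simply apply the above observation in the case $i=1$: since $M_w/M$ is $GV$-torsion and $N\in\mathcal{P}_w^{\dagger_\infty}\subseteq\mathcal{W}_\infty$, we get $\Ext^1_R(M_w/M,N)=0$ for every $N\in\mathcal{P}_w^{\dagger_\infty}$. By the definition of weak $w$-projective module, $M_w/M$ is weak $w$-projective.

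For assertion (2), I would exploit the short exact sequence
\[
0\longrightarrow M\longrightarrow M_w\longrightarrow M_w/M\longrightarrow 0
\]
and apply $\Hom_R(-,N)$ for an arbitrary $N\in\mathcal{P}_w^{\dagger_\infty}$. The long exact Ext sequence contains the fragment
\[
\Ext^1_R(M_w/M,N)\longrightarrow \Ext^1_R(M_w,N)\longrightarrow \Ext^1_R(M,N)\longrightarrow \Ext^2_R(M_w/M,N).
\]
Because $M_w/M$ is $GV$-torsion and $N$ is a strong $w$-module, both outer terms vanish, so $\Ext^1_R(M_w,N)\cong\Ext^1_R(M,N)$. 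The equivalence in (2) then follows at once from the definition of weak $w$-projective.

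The only potentially subtle step is justifying the vanishing of $\Ext^2_R(M_w/M,N)$, which requires the higher-$\Ext$ version of the strong-$w$-module property (not just the $i=1$ case built into the definition of weak $w$-projective); this is precisely what the inclusion $\mathcal{P}_w^{\dagger_\infty}\subseteq\mathcal{W}_\infty$ recalled in the introduction supplies. No other obstacle is expected, since both parts of the proposition reduce to inspecting one long exact sequence together with the $GV$-torsion nature of $M_w/M$.
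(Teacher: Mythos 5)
Your proposal is correct and follows essentially the same route as the paper: part (1) rests on $M_w/M$ being $GV$-torsion (hence weak $w$-projective), and part (2) on the short exact sequence $0\rightarrow M\rightarrow M_w\rightarrow M_w/M\rightarrow 0$. The only difference is presentational: where the paper delegates part (2) to a cited closure property of weak $w$-projective modules, you unpack it into the explicit long exact $\Ext$ sequence and correctly flag that the vanishing of $\Ext^2_R(M_w/M,N)$ needs the higher-degree vanishing supplied by $\mathcal{P}_w^{\dagger_\infty}\subseteq\mathcal{W}_\infty$.
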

\begin{proof}
	$(1)$. Let $M$ be a $GV$-torsion-free module. So, by [\cite{FH1}, Proposition 6.2.5] we have $M_w/M$ a $GV$-torsion module. Hence, by [\cite{FLQ}, Proposition 2.3(2)], we have $M_w/M$ is weak $w$-projective.\\
	$(2)$. Let $N$ be an $R$-module in $\mathcal{P}_{w}^{\dagger_\infty}$. Since $M$ is $GV$-torsion-free, we have by $(1)$ $M_w/M$ is weak $w$-projective module. Consider the following exact sequence
	$$ 0\rightarrow M\rightarrow M_w\rightarrow M_w/M\rightarrow 0$$ which is $w$-exact. Hence, by [\cite{FLQ}, Proposition 2.5], $M$ is  weak $w$-projective if and only if $M_w$ is weak $w$-projective.
\end{proof}
Recall that a ring $R$ is called a $DW$-ring if every ideal of $R$ is a $w$-ideal, or equivalently every maximal ideal of $R$ is $w$-ideal \cite{AM}. Examples of $DW$-rings are Pr\"{u}fer domains, domains with Krull dimension one, and rings with Krull dimension zero. We note that if $R$ is $DW$-ring, then every  $R$-module  in $\mathcal{P}_{w}^{\dagger_\infty}$.\\
In the following proposition, we will give a new characterizations of $DW$-rings which are the only rings with these properties.

\begin{proposition}\label{prrop3}
	Let $R$ be a ring. The following statements are equivalent:
	\begin{enumerate}
		\item Every weak $w$-projective $R$-module is projective.
		\item Every $w$-projective $R$-module is projective.
		\item Every $GV$-torsion $R$-module is projective.
		\item Every $GV$-torsion-free $R$-module is strong $w$-module.
		\item Every finitely presented type $w$-flat is projective.
		\item Every weak $w$-projective $R$-module is $w$-module.
		\item $R$ is $DW$-ring.
	\end{enumerate}
\end{proposition}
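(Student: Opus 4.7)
The plan is to prove the equivalence by showing (7) implies each of (1)--(6), and then supplying six converse implications back to (7). Because of the inclusion chain projective $\subseteq$ $w$-projective $\subseteq$ weak $w$-projective, the converses can be organized as (1) $\Rightarrow$ (2) $\Rightarrow$ (3) $\Rightarrow$ (7), while (4), (5), (6) each need a short standalone argument.

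For (7) $\Rightarrow$ everything, I would exploit the unifying fact that a $DW$-ring has $GV(R) = \{R\}$, so every $R$-module is $GV$-torsion-free and no nonzero $GV$-torsion module exists. Unwinding the definitions, $L(M) = M$ for every $M$, so $w$-projective collapses to projective; and (as noted just before the statement) every $R$-module lies in $\mathcal{P}_{w}^{\dagger_\infty}$, so weak $w$-projective coincides with projective as well. This yields (1), (2), and (6) immediately; (3) and (4) become vacuous; and (5) reduces to the classical fact that a finitely presented flat module is projective.

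The converses (1) $\Rightarrow$ (2) $\Rightarrow$ (3) $\Rightarrow$ (7) are straightforward: the first follows from the inclusion of $w$-projective modules into weak $w$-projective ones, the second from the fact that every $GV$-torsion module is $w$-projective \cite[Prop.~2.3(2)]{FLQ}, and the third from the observation that a projective module is $GV$-torsion-free, so any module that is simultaneously $GV$-torsion and projective must vanish; applied to $R/J$ for $J \in GV(R)$ this gives $J = R$. For (6) $\Rightarrow$ (7) I would again test on $R/J$: it is $GV$-torsion, hence weak $w$-projective by the same reference, hence a $w$-module by (6); but $w$-modules are $GV$-torsion-free, so $R/J = 0$. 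For (5) $\Rightarrow$ (7), the same $R/J$ is of finite type and weak $w$-projective, so Proposition \ref{lema4} makes it of finitely presented type; it is also $w$-flat, so (5) makes it projective, hence $GV$-torsion-free, hence zero.

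The main obstacle will be (4) $\Rightarrow$ (7), which requires an honest Ext computation rather than a structural reduction. Given $J \in GV(R)$, the submodule $J \subseteq R$ is $GV$-torsion-free, so by (4) it is a strong $w$-module; in particular $\Ext^1_R(R/J, J) = 0$. Applying $\Hom_R(-, J)$ to $0 \to J \to R \to R/J \to 0$, using $\Hom_R(R/J, J) = 0$ (because $R/J$ is $GV$-torsion and $J$ is $GV$-torsion-free) and $\Ext^1_R(R, J) = 0$, I would obtain
\[
0 \to J \xrightarrow{\;\mu\;} \End_R(J) \to \Ext^1_R(R/J, J) \to 0,
\]
where $\mu(a)$ sends $j$ to $aj$. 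Vanishing of the right-hand term lifts $\id_J$ to some $a \in J$ with $(1-a)J = 0$; since the $GV$-ideal identity $R \cong J^{\ast}$ forces $\Ann_R(J) = 0$, we conclude $a = 1 \in J$, so $J = R$, hence $GV(R) = \{R\}$ and $R$ is $DW$.
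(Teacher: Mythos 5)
Your proposal is correct, but it is organized quite differently from the paper's proof and is in places more self-contained. The paper runs the cycle $(1)\Rightarrow(2)\Rightarrow(3)\Rightarrow(4)\Rightarrow(7)\Rightarrow(1)$ with side branches $(1)\Rightarrow(6)\Rightarrow(7)$ and $(2)\Rightarrow(5)\Rightarrow(7)$, and it discharges the three returns to $(7)$ by citation: $(4)\Rightarrow(7)$ and $(6)\Rightarrow(7)$ via [\cite{FW}, Theorem 3.8] (the latter after passing through $M_w/M$ and Proposition \ref{propo5}), and $(5)\Rightarrow(7)$ via [\cite{MRM}, Proposition 2.1] applied to finitely presented $w$-flat modules. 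You instead use a hub-and-spoke scheme centered at $(7)$, and your three standalone returns all test the single module $R/J$ for $J\in GV(R)$ and conclude $J=R$: for $(6)\Rightarrow(7)$ from $w$-modules being $GV$-torsion-free, for $(5)\Rightarrow(7)$ from Proposition \ref{lema4} plus the fact that projective modules are $GV$-torsion-free, and for $(4)\Rightarrow(7)$ by an explicit trace-ideal computation with $\Ext^1_R(R/J,J)=0$ and $\Ann_R(J)=0$. That last argument is the genuine added content: it replaces the paper's appeal to [\cite{FW}, Theorem 3.8] with an elementary lifting of $\id_J$, and it is correct as written (exactness of $0\to J\to\End_R(J)\to\Ext^1_R(R/J,J)\to 0$ uses exactly $\Hom_R(R/J,J)=0$ and $\Ext^1_R(R,J)=0$, both of which you justify). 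Two small points to tighten: in $(5)\Rightarrow(7)$ you should say why $R/J$ is $w$-flat (either because it is $GV$-torsion, hence vanishes at every maximal $w$-ideal, or via [\cite{FLQ}, Corollary 2.11] from being weak $w$-projective); and in $(7)\Rightarrow(2)$ the remark that ``$L(M)=M$ makes $w$-projective collapse to projective'' is not an immediate unwinding of the definition (it only gives vanishing of $\Ext^1$ into torsion-free $w$-modules), but this is harmless since $(7)\Rightarrow(2)$ already follows from $(7)\Rightarrow(1)$ and the containment of $w$-projectives in weak $w$-projectives. Both proofs ultimately rest on the same unproved note that over a $DW$-ring every module lies in $\mathcal{P}_{w}^{\dagger_\infty}$, so you are on equal footing with the paper there; what your version buys is independence from [\cite{FW}, Theorem 3.8] and [\cite{MRM}, Proposition 2.1], at the cost of a slightly longer write-up.
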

\begin{proof}
	$(1)\Rightarrow (2) \;and\; (2)\Rightarrow (3)$. The are trivial.\\
	$(3)\Rightarrow (4)$. Let $M$ be a $GV$-torsion-free $R$-module, for any $GV$-torsion $R$-module $N$, we have $\Ext^i_R(N,M)=0$ since $N$ is projective. Hence, $M$ is a strong $w$-module.\\
	$(4)\Rightarrow (7)$. By [\cite{FW}, Theorem 3.8] since every strong $w$-module is $w$-module.\\
	$(1)\Rightarrow (6)$. Trivial, since every projective $R$-module is $w$-module.\\
	$(2)\Rightarrow (5)$. Let $M$ be a finitely presented type $w$-flat. By [\cite{FLQ}, Corollary 2.9], we have $M$ is a finite type $w$-projective. Hence, $M$ is a projective $R$-module by $(2)$.\\
	$(5)\Rightarrow (7)$. Let $M$ be a finitely presented $w$-flat. Then,  $M$ is finitely presented type $w$-flat, so $M$ is projective by $(5)$. Hence, by [\cite{MRM}, Proposition 2.1], we have $R$ is a $DW$-ring.\\
	$(6)\Rightarrow (7)$. Let $M$ be a $GV$-torsion-free $R$-module. Hence, by Proposition \ref{propo5}, we have $M_w/M$ is weak $w$-projectiveis and so $w$-module by $(6)$. Thus, $M_w/M$ is a $GV$-torsion-free. Hence, $M_w/M=0$ and so $M_w=M$. Thus, $M$ is $w$-module. Then, $R$ is a $DW$-ring by [\cite{FW}, Theorem 3.8].\\
	$(7)\Rightarrow (1)$. Let $M$ be a weak $w$-projective. For any $R$-module $N$, we have $\Ext^1_R(M,N)=0$ because $N\in\mathcal{P}_{w}^{\dagger_\infty}$ (since $R$ is $DW$). Hence, $M$ is a projective module.
\end{proof}
Note that the equivalence $(1)\Leftrightarrow (7)$ in Proposition \ref{prrop3} was given in [\cite{Pu wang}, Proposition 4.4] for the domain case.

L. Mao and N. Ding in [\cite{LMND}], proved that a ring $R$ is a Von Neumann regular if and only if every FP-projective $R$-module is projective.\\
Next, we will give new characterizations of a Von Neumann regular rings by weak $w$-projective modules.	
\begin{proposition}\label{prrop1}
	Let $R$ be a ring. Then, the following statements are equivalent:
	\begin{enumerate}
		\item Every FP-projective $R$-module is weak $w$-projective.
		\item Every finitely presented $R$-module is weak $w$-projectiv.
		\item Every finitely presented $R$-module is $w$-flat.
		\item $R$ is a Von Neumann regular.
	\end{enumerate}
\end{proposition}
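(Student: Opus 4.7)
The plan is to prove the cycle $(1)\Rightarrow(2)\Rightarrow(3)\Rightarrow(4)\Rightarrow(1)$, leveraging Megibben's characterization of absolutely pure modules, the class inclusion chain displayed just before Proposition \ref{prop1}, and Mao--Ding's result that $R$ is von Neumann regular iff every FP-projective module is projective.

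For $(1)\Rightarrow(2)$ the argument is immediate from definitions: by Megibben's theorem recalled in the introduction, $N$ is absolutely pure iff $\Ext_R^1(F,N)=0$ for every finitely presented $F$, so every finitely presented $R$-module is automatically FP-projective; hence the hypothesis in (1) at once yields (2). For $(2)\Rightarrow(3)$, the displayed chain of class inclusions gives weak $w$-projective $\subseteq$ $w$-flat, so any finitely presented module, being weak $w$-projective by (2), is $w$-flat. For $(4)\Rightarrow(1)$, if $R$ is von Neumann regular then every finitely presented module is flat and hence projective, so $\Ext_R^1(F,N)=0$ for every finitely presented $F$ and every $R$-module $N$; that is, every $R$-module is FP-injective. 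An FP-projective $M$ therefore satisfies $\Ext_R^1(M,N)=0$ for every $R$-module $N$, so $M$ is projective and, a fortiori, weak $w$-projective.

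The main obstacle is $(3)\Rightarrow(4)$. Here the strategy is to combine the hypothesis with a characterization of $w$-flatness in terms of localizations at $w$-maximal ideals: since every finitely presented $R$-module $M$ is $w$-flat, $M_\mathfrak{m}$ is flat over $R_\mathfrak{m}$ for every $\mathfrak{m}\in w\text{-}\Max(R)$. Every finitely presented $R_\mathfrak{m}$-module arises as such a localization after clearing denominators, and finite presentation plus flatness forces projectivity, so $R_\mathfrak{m}$ has weak global dimension zero; being local, $R_\mathfrak{m}$ is a field for every $w$-maximal $\mathfrak{m}$. The remaining step is to upgrade this to every maximal ideal, i.e., to rule out the existence of a maximal ideal properly containing some $GV$-ideal. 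For this I would argue that under (3) the global weak $w$-projective dimension of $R$ is zero, so by Proposition \ref{prrop3} (specifically the equivalence involving finitely presented type $w$-flat modules being projective) the ring is forced to be $DW$, whence $w\text{-}\Max(R)=\Max(R)$. Combining the two observations, $R_\mathfrak{m}$ is a field for every maximal $\mathfrak{m}$, which is the standard local criterion for $R$ to be von Neumann regular.

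The routine verifications are the definitional implications and the invocation of standard $\mathrm{Hom}$--$\mathrm{Ext}$ facts; the technical heart lies in the passage from $w$-flatness of every finitely presented module to the full flatness needed for von Neumann regularity, and this is the step where I expect to lean on prior structural results from \cite{MRM} and \cite{FH1}.
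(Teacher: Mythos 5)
Your cycle matches the paper's ($(1)\Rightarrow(2)\Rightarrow(3)\Rightarrow(4)\Rightarrow(1)$), and three of the four legs are fine: $(1)\Rightarrow(2)$ and $(2)\Rightarrow(3)$ are exactly the paper's arguments, and your $(4)\Rightarrow(1)$ just inlines the Mao--Ding fact that over a von Neumann regular ring FP-projective implies projective. The problem is $(3)\Rightarrow(4)$, which is also where the paper's proof carries all its weight. Your first half is sound: $w$-flatness of a finitely presented $M$ does localize to flatness, hence freeness, of $M_{\mathfrak{m}}$ over $R_{\mathfrak{m}}$ for $\mathfrak{m}\in w\text{-}\Max(R)$, so $R_{\mathfrak{m}}$ is a field at every \emph{maximal $w$-ideal}. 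But the bridge you propose to get from there to every maximal ideal does not hold up. You assert that ``under (3) the global weak $w$-projective dimension of $R$ is zero''; hypothesis (3) says only that \emph{finitely presented} modules are $w$-flat, which gives neither weak $w$-projectivity of all modules nor anything about the class w$\mathcal{P}_w$ collapsing. And even granting that, Proposition \ref{prrop3}(5) characterizes $DW$ by ``every finitely presented type $w$-flat module is \emph{projective}'' --- a conclusion that, for a von Neumann regular ring, is essentially the statement you are trying to prove, so invoking it here is circular. Note also that (3) cannot rule out a maximal ideal $\mathfrak{M}$ properly containing a $GV$-ideal $J$ by any soft argument: $R/J$ is finitely presented and $GV$-torsion, hence automatically $w$-flat, so no contradiction arises at the level you are working.

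The paper sidesteps this entirely: from $R/I$ being $w$-flat for every finitely generated ideal $I$ it concludes that the $w$-weak global dimension of $R$ is zero by [\cite{FL}, Proposition 3.3], and then cites [\cite{FH}, Theorem 4.4], which is precisely the nontrivial statement that $w$-weak global dimension zero (a condition tested only at maximal $w$-ideals) already forces $R$ to be von Neumann regular. That theorem is the missing ingredient in your argument; without it, or an independent proof of it, your passage from ``$R_{\mathfrak{m}}$ is a field for all $\mathfrak{m}\in w\text{-}\Max(R)$'' to genuine von Neumann regularity is a genuine gap.
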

\begin{proof} $(1)\Rightarrow(2)$. Follows from the fact that every finitely
	presented $R$-module is FP-projective.\\
	$(2)\Rightarrow(3)$. Let $M$ be a finitely presented $R$-module, so $M$ is weak $w$-projective. Hence, $M$ is $w$-flat by [\cite{FLQ}, Corollary 2.11].\\
	$(3)\Rightarrow(4)$. Let $I$ be a finitely generated ideal of $R$, then $R/I$ is finitely presented. So $R/I$ is $w$-flat by $(3)$, then $\wfd_R(R/I)=0$. Thus, $\w-wgldim(R)=0$ by [\cite{FL}, Proposition 3.3]. Hence, $R$ is Von Neumann regular by [\cite{FH}, Theorem 4.4].\\
	$(4)\Rightarrow (1)$. Let $M$ be a FP-projective, so $M$ is projective by [\cite{LMND}, Remarks 2.2]. Hence, $M$ is a weak $w$-projective.
\end{proof}
Next, we will give an example of FP-projective module which is not weak $w$-projective.
\begin{example}\label{example}
	Consider the local Quasi-Frobenius ring $R:=k[X]/(X^2)$ where $k$ is a field, and denote by $\overline{X}$ the residue class in $R$ of $X$. Then, $(\overline{X})$ is FP-projective $R$-module which is not weak $w$-projective.	
\end{example}
\begin{proof} Since $R$ is a Quasi-Frobenius ring, then every absolutely pure $R$-module is injective. Hence, for any absolutely pure $R$-module $N$, we have $\Ext_R^1((\overline{X}),N)=0$, so $(\overline{X})$ is FP-projective. But, $(\overline{X})$ is not projective by [\cite{TCL}, Example 2.2], and so not weak $w$-projective, since $R$ is $DW$-ring.
\end{proof}
Recall from [\cite{FH}] that a ring $R$ is said to be $w$-semi-hereditary if every finite type ideal of $R$ is $w$-projective.
\begin{proposition}\label{prop5}
	The following are equivalent:
	\begin{enumerate}
		\item $R$ $w$-semi-hereditary.
		\item Every finite type submodule of a free module is weak $w$-projective.
		\item Every finite type ideal of $R$ is a weak $w$-projective.
		\item Every finitely generated submodule of a free module is weak $w$-projective.
		\item Every finitely generated ideal of $R$ is a weak $w$-projective
	\end{enumerate}
\end{proposition}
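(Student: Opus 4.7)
The plan is to prove the equivalences via the chain $(1) \Rightarrow (2) \Rightarrow (4) \Rightarrow (5) \Rightarrow (1)$, together with the ancillary implications $(2) \Rightarrow (3) \Rightarrow (5)$ that sandwich (3) between already-linked statements. The implications $(2) \Rightarrow (3)$, $(2) \Rightarrow (4)$, $(3) \Rightarrow (5)$, and $(4) \Rightarrow (5)$ are immediate, because every finitely generated submodule is of finite type (a genuine surjection is \emph{a fortiori} a $w$-epimorphism) and every ideal is a submodule of the free module $R$.

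For $(1) \Rightarrow (2)$, I would adapt the classical rank induction that shows every finitely generated submodule of a free module over a semi-hereditary ring is projective. Given a finite type submodule $M$ of a free module $F$, a $w$-epi-presentation $g : R^n \to M$ places the image of $g$ inside a finite-rank free summand $F_0 = R^k$ of $F$, and the $GV$-torsion cokernel $M/g(R^n)$ forces $M$ into $F_0$ up to $GV$-torsion. Projecting onto the last coordinate produces a finite type ideal, which is $w$-projective by (1), and a $w$-splitting argument applied to the resulting $w$-exact sequence reduces the kernel to a finite type submodule of $R^{k-1}$, at which point the induction closes. Combined with the fact that every $w$-projective module is weak $w$-projective, this delivers (2).

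The decisive implication is $(5) \Rightarrow (1)$. Let $I$ be a finite type ideal and fix a $w$-epimorphism $g : R^n \to I$; its image $J = g(R^n)$ is a finitely generated ideal with $GV$-torsion quotient $I/J$. By (5), $J$ is weak $w$-projective. For any $N \in \mathcal{P}_{w}^{\dagger_\infty} \subseteq \mathcal{W}_{\infty}$, the long exact sequence attached to $0 \to J \to I \to I/J \to 0$ yields
\[
\Ext_R^1(I/J, N) \longrightarrow \Ext_R^1(I, N) \longrightarrow \Ext_R^1(J, N),
\]
with the left term vanishing because $I/J$ is $GV$-torsion and $N$ is a strong $w$-module, and the right term vanishing by (5). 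Hence $\Ext_R^1(I, N) = 0$, so $I$ is weak $w$-projective. Because $I$ is of finite type, \cite[Corollary 2.9]{FLQ} (already invoked in Proposition \ref{lema4}) promotes weak $w$-projectivity to $w$-projectivity, and thus $R$ is $w$-semi-hereditary.

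I expect the main obstacle to be $(1) \Rightarrow (2)$, where the $w$-analogue of the classical induction requires one to verify that projection onto a coordinate preserves ``finite type'' for the kernel; this hinges on a $w$-splitting property for short $w$-exact sequences whose quotient is $w$-projective. Everything else reduces to the short Ext-sequence calculation of $(5) \Rightarrow (1)$ together with the equivalence of weak $w$-projectivity and $w$-projectivity for modules of finite type.
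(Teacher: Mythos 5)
Your logical skeleton coincides with the paper's: the trivial implications linking (2), (3), (4), (5), plus the two substantive arrows $(1)\Rightarrow(2)$ and $(5)\Rightarrow(1)$. Your $(5)\Rightarrow(1)$ is correct and is essentially the paper's argument with the citation unpacked: the paper observes that a finite type ideal is $w$-isomorphic to a finitely generated subideal and invokes [\cite{FLQ}, Corollary 2.7] (invariance of weak $w$-projectivity under $w$-isomorphism), which is exactly what your three-term $\Ext$ sequence proves directly using $\mathcal{P}_{w}^{\dagger_\infty}\subseteq\mathcal{W}_{\infty}$; your final promotion to $w$-projectivity via [\cite{FLQ}, Corollary 2.9] is the step the paper leaves implicit, so spelling it out is an improvement.

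The gap is in $(1)\Rightarrow(2)$. The paper does not run a rank induction at all: it simply cites [\cite{FH}, Theorem 4.11], which already states that over a $w$-semi-hereditary ring every finite type submodule of a free module is $w$-projective, and then passes to weak $w$-projectivity by [\cite{FLQ}, Corollary 2.9]. Your proposed induction is an attempt to reprove that theorem, and the step you yourself flag is genuinely unresolved: in the classical semi-hereditary argument the kernel $M\cap R^{k-1}$ of the last-coordinate projection is finitely generated because the projective quotient splits the sequence, whereas in the $w$-setting the quotient $\pi(M)$ is only $w$-projective, which does not split the sequence, so you have no argument that the kernel is again of finite type and the induction does not close as written. (Your preliminary reduction that $M$ lands inside a finite-rank summand $F_0$ is fine --- indeed $M/(M\cap F_0)$ is a $GV$-torsion submodule of the $GV$-torsion-free module $F/F_0$, hence zero, so $M\subseteq F_0$ outright, not merely up to $GV$-torsion.) Since the statement you need is precisely the cited Theorem 4.11 of Wang--Kim, the honest fix is to quote it rather than rebuild it; as it stands, your $(1)\Rightarrow(2)$ is a plan with a hole exactly where you predicted one.
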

\begin{proof} $(1)\Rightarrow (2)$. Let $J$ be a finite type submodule of a any free module. Hence, $J$ is $w$-projective by [\cite{FH}, Theorem 4.11]. Then $J$ is weak $w$-projective by [\cite{FLQ}, Corollary 2.9].\\
	$(2)\Rightarrow (3)\Rightarrow (5)$ and $(2)\Rightarrow (4)\Rightarrow (5)$. These are trivial.\\
	$(5)\Rightarrow (1)$. Let $J$ be a finite type ideal of $R$. Then $J$ is $w$-isomorphic to a	finitely generated subideal $I$ of $J$. Hence $J$ is weak $w$-projective by hypothesis and
	[\cite{FLQ}, Corollary 2.7].
\end{proof}

\begin{proposition}\label{propo}
	Every $GV$-torsion-free weak $w$-projective module is torsion-free.
\end{proposition}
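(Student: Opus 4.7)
The plan is to reduce the statement to $w$-flatness first. By the implication already recorded in the excerpt, namely [FLQ, Corollary 2.11], every weak $w$-projective module is $w$-flat, so it suffices to prove that any $GV$-torsion-free $w$-flat module $M$ is torsion-free. I would therefore spend the whole proof working only with the hypothesis that $M$ is $GV$-torsion-free and $w$-flat.

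To show torsion-freeness, fix a regular element $r \in R$ and an element $x \in M$ with $rx = 0$; the goal is to force $x = 0$. The multiplication map $\mu_r : R \to R$ is a genuine monomorphism, hence in particular a $w$-monomorphism. Applying $w$-flatness of $M$ to $\mu_r$, the induced map $1 \otimes \mu_r : M \otimes_R R \to M \otimes_R R$, which under the canonical identification is just multiplication by $r$ on $M$, is itself a $w$-monomorphism. Therefore its kernel $K := \{y \in M : ry = 0\}$ satisfies $K_{\mathfrak{m}} = 0$ for every $\mathfrak{m} \in w\text{-}\mathrm{Max}(R)$.

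Now I would invoke the standard fact from $w$-theory (see [FH1]) that a module vanishing at every maximal $w$-ideal is precisely a $GV$-torsion module; hence $K$ is $GV$-torsion. On the other hand, $K$ is a submodule of the $GV$-torsion-free module $M$, and submodules of $GV$-torsion-free modules are $GV$-torsion-free. A module that is simultaneously $GV$-torsion and $GV$-torsion-free must be zero, so $K = 0$, and in particular $x = 0$. Thus no nonzero element of $M$ is annihilated by a regular element of $R$, which is the definition of torsion-freeness.

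The only subtle step is the passage from "kernel of a $w$-monomorphism" to "$GV$-torsion," which rests on the characterization of $GV$-torsion modules via vanishing at all maximal $w$-ideals; everything else is formal manipulation of the definitions combined with the reduction weak $w$-projective $\Rightarrow$ $w$-flat that the paper has already recorded.
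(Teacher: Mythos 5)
Your proposal is correct and follows the same route as the paper: both arguments first invoke [FLQ, Corollary 2.11] to pass from weak $w$-projective to $w$-flat, and then conclude that a $GV$-torsion-free $w$-flat module is torsion-free. The only difference is that the paper cites [FH1, Proposition 6.7.6] for this last step, whereas you prove it directly (and correctly) by showing that the kernel of multiplication by a regular element vanishes at every maximal $w$-ideal, hence is $GV$-torsion, hence zero inside a $GV$-torsion-free module.
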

\begin{proof}
	Let $M$ be a $GV$-torsion-free weak $w$-projective module. Hence, $M$ is a $GV$-torsion-free $w$-flat by [\cite{FLQ}, Corollary 2.11].Thus, by [\cite{FH1}, Proposition 6.7.6], we have $M$ is torsion-free.
\end{proof}
In the next example we will prove that a weak $w$-projective module need not to be torsion-free.

\begin{example}\label{exam}
	Let $R$ be an integral domain and $J$ be a proper $GV$-ideal of $R$. Then $M:=R \oplus R/J$
	is a weak $w$-projective module but not torsion-free.
\end{example}
\begin{proposition}\label{prop3}
	Let $R$ be a ring and $M$ be a finitely presented $R$-module. Then, the following statements are equivalent:
	\begin{enumerate}
		\item $M$ is $w$-split.
		\item $M$ is weak $w$-projective.
		\item For any $w$-exact
		$\xymatrix
		{ 0\rightarrow A\rightarrow B\rightarrow C\rightarrow 0 }$, the sequence\\
		$\xymatrix
		{ 0\rightarrow \Hom_R(M,A)\rightarrow \Hom_R(M,B)\rightarrow \Hom_R(M,C)\rightarrow 0}$ is $w$-exact.
	\end{enumerate}
\end{proposition}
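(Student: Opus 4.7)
The plan is to prove the cycle $(1) \Rightarrow (2) \Rightarrow (3) \Rightarrow (1)$, exploiting the fact that under the finitely presented hypothesis the three flavours of projectivity collapse. The implication $(1) \Rightarrow (2)$ is immediate from the inclusion chain $\mathcal{S}_w \subseteq \mathcal{P}_w \subseteq \mathrm{w}\mathcal{P}_w$ recorded in the introduction, so no work is required there.

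For $(2) \Rightarrow (3)$, I would first upgrade $M$ from weak $w$-projective to $w$-projective: since $M$ is finitely presented it is in particular of finite type, so Proposition \ref{lema4} (equivalently \cite{FLQ}, Corollary 2.9) applies. Now given a $w$-exact sequence $0 \to A \to B \to C \to 0$, I localize at any $\m \in w\text{-}\Max(R)$ to obtain a short exact sequence of $R_\m$-modules. Because $M$ is finitely presented, both $\Hom_R(M,-)$ and $\Ext^1_R(M,-)$ commute with localization at $\m$, so the long exact $\Ext$ sequence becomes a sequence of $R_\m$-modules ending in $\Ext^1_{R_\m}(M_\m, A_\m)$. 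The standard fact in $w$-theory that a finitely presented $w$-projective module has projective localizations at every maximal $w$-ideal kills this $\Ext^1$ term, giving exactness of $0 \to \Hom_R(M,A)_\m \to \Hom_R(M,B)_\m \to \Hom_R(M,C)_\m \to 0$ at every $\m$, which is precisely $w$-exactness.

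For $(3) \Rightarrow (1)$, let $N$ be any $R$-module and embed it in its injective hull $E(N)$ to form the exact (hence $w$-exact) sequence $0 \to N \to E(N) \to E(N)/N \to 0$. Hypothesis (3) makes $0 \to \Hom_R(M,N) \to \Hom_R(M,E(N)) \to \Hom_R(M,E(N)/N) \to 0$ $w$-exact. Comparing with the long exact $\Ext$ sequence (and using $\Ext^1_R(M, E(N)) = 0$) presents $\Ext^1_R(M,N)$ as the cokernel of the last map of the above $w$-exact sequence; since a $w$-epimorphism has $GV$-torsion cokernel, $\Ext^1_R(M,N)$ is $GV$-torsion. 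As $N$ was arbitrary, $M$ is $w$-split.

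The only delicate point is the appeal inside $(2) \Rightarrow (3)$ to the fact that a finitely presented $w$-projective module localizes to a projective module at every maximal $w$-ideal; I would cite this from the Wang--Kim series on $w$-operations rather than reprove it. Everything else is routine bookkeeping with the long exact $\Ext$ sequence and the characterisation of $w$-exactness via localization at $w\text{-}\Max(R)$.
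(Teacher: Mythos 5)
Your proof is correct and follows the same cycle $(1)\Rightarrow(2)\Rightarrow(3)\Rightarrow(1)$ as the paper; in particular your argument for $(2)\Rightarrow(3)$ --- localize at each maximal $w$-ideal $\m$, use that $M_{\m}$ is projective over $R_{\m}$ (the paper invokes [\cite{FLQ}, Proposition 2.8] to get $M_{\m}$ free) and that $\Hom$ commutes with localization because $M$ is finitely presented --- is exactly the paper's argument. The only divergence is $(3)\Rightarrow(1)$: the paper simply cites [\cite{WLQ}, Proposition 2.4], whereas you give a self-contained proof via the injective hull of an arbitrary $N$, exactness of $\Ext^1_R(M,E(N))=0$, and the fact that a $w$-epimorphism has $GV$-torsion cokernel; that argument is sound and is a useful expansion of the citation. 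One small attribution slip: the upgrade from weak $w$-projective to $w$-projective for finite type modules is [\cite{FLQ}, Corollary 2.9], not Proposition \ref{prop3}'s companion Proposition \ref{lema4} of this paper (which only asserts finitely presented type), but this does not affect the argument since all you actually use is projectivity of $M_{\m}$.
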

\begin{proof} $(1)\Rightarrow (2)$. Trivial, since every $w$-split $R$-module is weak $w$-projective.\\
	$(2)\Rightarrow (3)$. Let $ 0\rightarrow A\rightarrow B\rightarrow C\rightarrow 0$ be a $w$-exact sequence of $R$-modules. Then, for any maximal $w$-ideal $\m$ of $R$, $ 0\rightarrow A_{\m}\rightarrow B_{\m}\rightarrow C_{\m}\rightarrow 0$ is exact sequence of $R_{\m}$-modules. Thus, since $M_{\m}$ is free by [\cite{FLQ}, Proposition 2.8], we have the exact sequence
	$\xymatrix
	{ 0\rightarrow \Hom_R(M_{\m},A_{\m})\rightarrow \Hom_R(M_{\m},B_{\m})\rightarrow \Hom_R(M_{\m},C_{\m})\rightarrow 0}$.
	Since $M$ is finitely presented, we have the commutative diagram
	$$\begin{array}{ccccc}
	{\rm Hom}_{R_{\mathfrak{m}}}(M_{\mathfrak{m}}, A_{\mathfrak{m}}) & \rightarrow & {\rm Hom}_{R_{\mathfrak{m}}}(M_{\mathfrak{m}},B_{\mathfrak{m}} ) & \rightarrow & {\rm Hom}_{R_{\mathfrak{m}}}(M_{\mathfrak{m}},C_{\mathfrak{m}}) \\
	\mid\mid\wr &  & \mid\mid\wr &  & \mid\mid\wr \\
	{\rm Hom}_{R}(M, A)_{\mathfrak{m}} & \rightarrow & {\rm Hom}_{R}(M,B )_{\mathfrak{m}} & \rightarrow & {\rm Hom}_{R}(M,C)_{\mathfrak{m}}
	\end{array}$$
	Thus, $0\rightarrow {\rm Hom}_{R}(M, A)_{\mathfrak{m}}  \rightarrow {\rm Hom}_{R}(M,B )_{\mathfrak{m}}   \rightarrow   {\rm Hom}_{R}(M,C)_{\mathfrak{m}}\rightarrow 0$ is exact, and so, $0\rightarrow {\rm Hom}_R(M,A)\rightarrow {\rm Hom}_R(M,B) \rightarrow {\rm Hom}_R(M,C)\rightarrow 0$ is $w$-exact.\\
	$(3)\Rightarrow (1)$. By [\cite{WLQ}, Proposition 2.4].
\end{proof}
Recall from \cite{SXFW1}, that a $w$-exact sequence of $R$-modules $\xymatrix{0\rightarrow A\rightarrow B\rightarrow C\rightarrow 0}$ is said to be $w$-pure exact if, for any $R$-module $M$,
the induced sequence $$\xymatrix{0\rightarrow A\otimes M\rightarrow B\otimes M \rightarrow C\otimes M \rightarrow 0}$$ is $w$-exact.
\begin{proposition}\label{prop}
	Let $C$ be a finitely presented type $R$-module. Then, the following statements are equivalent:
	\begin{enumerate}
	\item $C$ is a weak $w$-projective $R$-module.
	\item Every $w$-exact sequence of $R$-modules $ 0\rightarrow A\rightarrow B\rightarrow C\rightarrow 0$ is $w$-pure exact.
\end{enumerate}
\end{proposition}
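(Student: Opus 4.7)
The plan is to argue locally at each $w$-maximal ideal. The pivot is [FLQ, Proposition 2.8], which, under the finitely presented type hypothesis on $C$, ties weak $w$-projectivity to the freeness of every stalk $C_\m$ at $\m\in w\text{-Max}(R)$. Throughout I will use the fact that since $C$ is of finitely presented type, $C_\m$ is a finitely presented $R_\m$-module for every $\m\in w\text{-Max}(R)$.

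For $(1)\Rightarrow(2)$, I would start from any $w$-exact sequence $0\to A\to B\to C\to 0$ and localize at an arbitrary $\m\in w\text{-Max}(R)$, obtaining an exact sequence $0\to A_\m\to B_\m\to C_\m\to 0$ of $R_\m$-modules. Since $C$ is weak $w$-projective and of finitely presented type, [FLQ, Proposition 2.8] delivers that $C_\m$ is free, so the localized sequence splits. Splitting is preserved by $-\otimes_{R_\m}M_\m$ for any $R$-module $M$, so the sequence $0\to (A\otimes M)_\m\to (B\otimes M)_\m\to (C\otimes M)_\m\to 0$ is exact at every $\m\in w\text{-Max}(R)$. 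This is exactly the $w$-exactness of $0\to A\otimes M\to B\otimes M\to C\otimes M\to 0$, so the original sequence is $w$-pure exact.

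For $(2)\Rightarrow(1)$, since $C$ is in particular of finite type, I would choose a $w$-exact sequence $0\to K\to F\to C\to 0$ with $F$ finitely generated free. By hypothesis this sequence is $w$-pure exact, and localizing at any $\m\in w\text{-Max}(R)$ (noting that every $R_\m$-module arises as some $M_\m$) produces a pure exact sequence $0\to K_\m\to F_\m\to C_\m\to 0$ of $R_\m$-modules. The module $C_\m$ is finitely presented over $R_\m$, and a pure exact sequence with finitely presented cokernel necessarily splits; hence $C_\m$ is a direct summand of the free module $F_\m$, and therefore free since $R_\m$ is local. Appealing to the converse implication of [FLQ, Proposition 2.8] then yields the weak $w$-projectivity of $C$.

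The main obstacle is the reliance on [FLQ, Proposition 2.8] in both directions. The forward implication is used explicitly in this paper (as in the proof of Proposition \ref{prop3}) and presents no difficulty, but the converse needed in $(2)\Rightarrow(1)$ is more delicate. If only one implication of that proposition is documented, I would recover the missing converse by observing that free stalks at every $\m\in w\text{-Max}(R)$ force $C$ to be $w$-projective via the standard local criterion in the $w$-theory, and therefore weak $w$-projective because $\mathcal{P}_{w}\subseteq \mathrm{w}\mathcal{P}_{w}$.
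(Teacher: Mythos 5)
Your proof is correct, but it takes a genuinely different route from the paper's. The paper disposes of both implications by citation: weak $w$-projective implies $w$-flat by [\cite{FLQ}, Corollary 2.11]; the equivalence ``a finitely presented type module $C$ is $w$-flat if and only if every $w$-exact sequence ending in $C$ is $w$-pure exact'' is exactly [\cite{SXFW1}, Theorem 2.6]; and a finitely presented type $w$-flat module is weak $w$-projective by [\cite{FLQ}, Corollary 2.9]. You instead unwind everything locally: freeness of the stalks $C_{\mathfrak{m}}$ via [\cite{FLQ}, Proposition 2.8] gives local splitting and hence $w$-pure exactness, while conversely purity of the localized presentation together with finite presentation of $C_{\mathfrak{m}}$ over $R_{\mathfrak{m}}$ forces splitting, hence freeness, hence weak $w$-projectivity. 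In effect you re-prove the relevant direction of [\cite{SXFW1}, Theorem 2.6] rather than quoting it, which makes the mechanism transparent at the cost of length; the paper's version is shorter but hides the local splitting inside the cited theorem. Your concern about needing a converse to [\cite{FLQ}, Proposition 2.8] is resolved correctly by your own fallback, and most cleanly by noting that stalkwise freeness at all maximal $w$-ideals gives $w$-flatness, after which [\cite{FLQ}, Corollary 2.9] --- the same corollary the paper invokes --- upgrades a finitely presented type $w$-flat module to a weak $w$-projective one. If you write this up, record explicitly the two facts your localization step leans on: every $R_{\mathfrak{m}}$-module $N$ satisfies $N_{\mathfrak{m}}\cong N$ and $(A\otimes_R N)_{\mathfrak{m}}\cong A_{\mathfrak{m}}\otimes_{R_{\mathfrak{m}}}N$, so $w$-purity really does localize to purity; and a pure exact sequence whose cokernel is finitely presented splits.
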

\begin{proof} $(1)\Rightarrow(2)$. Since every weak $w$-projective is $w$-flat by [\cite{FLQ}, Corollary 2.11]. Hence, by [\cite{SXFW1}, Theorem 2.6], we have the result.\\
	$(2)\Rightarrow(1)$. Let $ 0\rightarrow A\rightarrow B\rightarrow C\rightarrow 0$ be a $w$-exact sequence, so is a $w$-pure exact by hypothesis. Thus, $C$ is $w$-flat by [\cite{SXFW1}, Theorem 2.6]. Hence $C$ is a weak $w$-projective by [\cite{FLQ}, Corollary 2.9].
\end{proof}

\begin{proposition}\label{pprop}
	The following are equivalent for a finite type $R$-module $M$.
	\begin{enumerate}
		\item $M$ is a $w$-projective module.
		\item $\Ext_R^1(M,B)=0$ for any $B \in \mathcal{P}_{w}^{\dagger_\infty}$.
		\item $\Ext_R^1(M,N)=0$ for any $R\{x\}$-module $N$.
		\item $M\{x\}$ is a projective $R\{x\}$-module.
	\end{enumerate}
\end{proposition}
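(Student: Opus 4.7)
The plan is to run the cycle $(1) \Rightarrow (2) \Rightarrow (3) \Rightarrow (4) \Rightarrow (1)$, the whole thing being glued together by the comparison of $\Ext$ over $R$ with $\Ext$ over the $w$-Nagata ring $R\{x\}$. The first two implications are essentially bookkeeping: $(1) \Rightarrow (2)$ is the containment $\mathcal{P}_w \subseteq w\mathcal{P}_w$ recalled right after Lemma \ref{lemma}, so a $w$-projective module automatically satisfies $\Ext_R^1(M,B)=0$ on all of $\mathcal{P}_{w}^{\dagger_\infty}$; and $(2)\Rightarrow(3)$ follows by plugging into $(2)$ the fact from [FLQ, Proposition 2.4] that every $R\{x\}$-module lies in $\mathcal{P}_{w}^{\dagger_\infty}$.

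The step $(3)\Rightarrow (4)$ is the one that does the real work, and it is the step I would expect to be the main obstacle, because it requires a change-of-rings comparison. The key point is that $R\{x\}$ is flat as an $R$-algebra: given any projective resolution $P_\bullet \to M$ over $R$, the complex $P_\bullet\otimes_R R\{x\} \to M\{x\}$ is a projective resolution of $M\{x\}$ over $R\{x\}$. Combining this with the Hom--tensor adjunction $\Hom_{R\{x\}}(P\otimes_R R\{x\},N) \cong \Hom_R(P,N)$ for an $R\{x\}$-module $N$, one obtains the natural isomorphism
\[
\Ext^1_{R\{x\}}(M\{x\},N) \cong \Ext^1_R(M,N)
\]
for every $R\{x\}$-module $N$. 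Hypothesis $(3)$ then forces the right-hand side, hence the left-hand side, to vanish for every $R\{x\}$-module $N$; since $M$ is of finite type, $M\{x\}$ is a finitely generated $R\{x\}$-module, and the vanishing of $\Ext^1_{R\{x\}}(M\{x\},-)$ on all $R\{x\}$-modules is exactly projectivity of $M\{x\}$ over $R\{x\}$.

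Finally, for $(4)\Rightarrow (1)$, I would invoke the standard Wang--Kim characterization of $w$-projective modules of finite type via the $w$-Nagata ring: an $R$-module $M$ of finite type is $w$-projective if and only if $M\{x\}$ is projective over $R\{x\}$ (see [FH1, Theorem 6.7.24]). This closes the cycle and finishes the proof. The delicate point throughout is that the equivalence is stated only under the standing assumption that $M$ is of finite type; this hypothesis is used twice, once in $(3)\Rightarrow (4)$ to make $M\{x\}$ manageable as an $R\{x\}$-module, and once in $(4)\Rightarrow (1)$ where the Wang--Kim criterion requires finite type.
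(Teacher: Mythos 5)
Your proposal is correct and follows essentially the same route as the paper: the same cycle $(1)\Rightarrow(2)\Rightarrow(3)\Rightarrow(4)\Rightarrow(1)$, with $(2)\Rightarrow(3)$ via the fact that every $R\{x\}$-module lies in $\mathcal{P}_{w}^{\dagger_\infty}$, the change-of-rings isomorphism $\Ext^n_{R\{x\}}(M\{x\},N)\cong\Ext^n_R(M,N)$ for $(3)\Rightarrow(4)$ (which you derive directly from flatness of $R\{x\}$ and adjunction rather than citing it, but this is the same content), and the Wang--Kim finite-type criterion for $(4)\Rightarrow(1)$.
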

\begin{proof} $(1)\Rightarrow (2)$. This is trivial.\\
$(2)\Rightarrow (3)$. By [\cite{FLQ}, Proposition 2.4].\\
$(3)\Rightarrow (4)$. Let $N$ be an $R\{x\}$-module, we have by [\cite{wang and Kim}, Proposition 2.5],
$$\Ext^n_{R\{x\}}(M\{x\},N)\cong \Ext^n_{R}(M,N)=0.$$ Thus, $M\{x\}$ is a projective $R\{x\}$-module.\\
$(4)\Rightarrow (1)$. Let $M\{x\}$ be a projective $R\{x\}$-module, so $M\{x\}$ is finitely generated by [\cite{FH1}, Theorem 6.6.24] and since $M$ is of finite type. Hence, by [\cite{FH1}, Theorem 6.7.18], $M$ is $w$-projective module.
\end{proof}

Recall form [\cite{FLQ}], that an $R$-module $D$ is said to be $\mathcal{P}_{w}^{\dagger_\infty}$-divisible if it is isomorphic to $E/N$ where $E$ is a $GV$-torsin-free injective $R$-module and $N \in \mathcal{P}_{w}^{\dagger_\infty}$ is a submodule of $E$.
\begin{proposition}\label{h-divi}
	Let $M$ be an $R$-module and any integer $m \geq1$. The following are equivalent.
	\begin{enumerate}
		\item $\wwpd_R M\leq m$.
		\item $\Ext_R^m(M,D)=0$ for all $\mathcal{P}_{w}^{\dagger_\infty}$-divisible $R$-module $D$.
	\end{enumerate}
\end{proposition}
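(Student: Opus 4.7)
The plan is to leverage the standard equivalence
\[
\wwpd_R M \le m \iff \Ext_R^{m+1}(M,N)=0 \text{ for every } N\in\mathcal{P}_{w}^{\dagger_\infty},
\]
which follows from Lemma \ref{lemma} by the usual dimension-shifting along a weak $w$-projective resolution of length $m$. Once this is available, the proof becomes a translation between Ext of $N$ and Ext of $D = E/N$, using the short exact sequence built into the definition of a $\mathcal{P}_w^{\dagger_\infty}$-divisible module together with the injectivity of the ambient $E$.

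For $(1)\Rightarrow(2)$, I would take any $\mathcal{P}_{w}^{\dagger_\infty}$-divisible $D\cong E/N$ with $E$ a $GV$-torsion-free injective $R$-module and $N\in\mathcal{P}_{w}^{\dagger_\infty}$, and apply $\Hom_R(M,-)$ to $0\to N\to E\to D\to 0$. The resulting long exact sequence contains
\[
\Ext_R^m(M,E)\longrightarrow \Ext_R^m(M,D)\longrightarrow \Ext_R^{m+1}(M,N).
\]
Injectivity of $E$ kills the left term for $m\ge 1$, while the hypothesis $\wwpd_R M\le m$ kills the right term via the preliminary equivalence. Hence $\Ext_R^m(M,D)=0$.

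For $(2)\Rightarrow(1)$, it suffices to show $\Ext_R^{m+1}(M,N)=0$ for an arbitrary $N\in\mathcal{P}_{w}^{\dagger_\infty}$. I would embed $N$ into its injective hull $E(N)$: since $N$ is $GV$-torsion-free and essential in $E(N)$, any nonzero $GV$-torsion element of $E(N)$ would meet $N$ nontrivially, contradicting that $N$ is $GV$-torsion-free. Thus $E(N)$ is $GV$-torsion-free injective, so $E(N)\in\mathcal{P}_{w}^{\dagger_\infty}$, and $D:=E(N)/N$ is $\mathcal{P}_{w}^{\dagger_\infty}$-divisible by definition. The associated long exact sequence, again with the injectivity of $E(N)$ killing the neighbouring Ext terms, gives an isomorphism $\Ext_R^{m+1}(M,N)\cong \Ext_R^m(M,D)$, and the right-hand side vanishes by (2).

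The main (and essentially only) obstacle is the preliminary equivalence between $\wwpd_R M\le m$ and the single Ext-vanishing condition on $\mathcal{P}_w^{\dagger_\infty}$; once that is recorded, both implications reduce to a one-line manipulation of the long exact sequence. The verification that $E(N)\in\mathcal{P}_{w}^{\dagger_\infty}$ is a routine essential-extension argument.
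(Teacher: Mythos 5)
Your proof is correct and follows essentially the same route as the paper's: both directions reduce to the long exact sequence attached to $0\to N\to E\to D\to 0$ together with the equivalence $\wwpd_R M\le m \iff \Ext_R^{m+1}(M,N)=0$ for all $N\in\mathcal{P}_w^{\dagger_\infty}$ (cited in the paper as Proposition 3.1 of \cite{FLQ}). The only differences are cosmetic: the paper's two implication labels are accidentally swapped relative to the arguments they head, and you supply the essential-extension justification that $E(N)$ is $GV$-torsion-free, which the paper leaves implicit.
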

\begin{proof}
$(1)\Rightarrow (2)$. Let $N\in \mathcal{P}_{w}^{\dagger_\infty}$. Then there exists an exact sequence of $R$-modules  $0 \rightarrow N \rightarrow E \rightarrow H \rightarrow 0$, where $E$ is a $GV$-torsion-free injective $R$-module. Hence, $D$ is $\mathcal{P}_{w}^{\dagger_\infty}$-divisibl. Then we have the induced exact sequence
$$\Ext^m_R(M, H) \rightarrow \Ext^{m+1}_R(M,N) \rightarrow \Ext^{m+1}_R(M,E)=0,$$ for any integer $m \geq1$. The left term is zero by hypothesis. Hence, $\Ext^{m+1}_R(M,N)=0$, which implies that $\wwpd_R M\leq m$ by [\cite{FLQ}, Proposition 3.1].\\
$(2)\Rightarrow (1)$. Let $\wwpd_R M\leq m$ and $D$ be a $\mathcal{P}_{w}^{\dagger_\infty}$-divisible $R$-module. Then we have an exact sequence $0 \rightarrow N \rightarrow E \rightarrow H \rightarrow 0$, where $E$ is a $GV$-torsion-free injective $R$-module and $N\in \mathcal{P}_{w}^{\dagger_\infty}$. Hence, we have the exact sequence
$$0=\Ext^m_R(M,E) \rightarrow \Ext^{m}_R(M,H) \rightarrow \Ext^{m+1}_R(M,N).$$ The right term is zero by [\cite{FLQ}, Proposition 3.1]. Therefore, $\Ext^{m}_R(M,H)=0$.
\end{proof}

\begin{proposition} Let $M$ and $N$ be two $R$-modules. Then,
\begin{center}
$\wwpd_R(M\oplus N)=\sup\{\wwpd_R M,\wwpd_R N\}$
	\end{center}
\end{proposition}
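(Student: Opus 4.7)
The plan is to reduce the equality to a standard additivity computation for Ext. The key fact I would invoke is that the functor $\Ext_R^k(-,L)$ converts direct sums in the first variable into direct products (which for two summands is just a direct sum), giving a natural isomorphism
\[
\Ext_R^k(M\oplus N,\,L)\;\cong\;\Ext_R^k(M,L)\oplus\Ext_R^k(N,L)
\]
for every $R$-module $L$ and every $k\geq 0$. Combining this with the Ext-vanishing characterization of the weak $w$-projective dimension (namely that $\wwpd_R X\leq m$ if and only if $\Ext_R^{m+1}(X,L)=0$ for every $L\in\mathcal{P}_{w}^{\dagger_\infty}$, by \cite{FLQ}, Proposition 3.1) reduces the statement to elementary bookkeeping.

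Set $s=\sup\{\wwpd_R M,\wwpd_R N\}$, possibly $\infty$. For the inequality $\wwpd_R(M\oplus N)\leq s$, I would assume $s<\infty$ (otherwise there is nothing to prove) and take any $L\in\mathcal{P}_{w}^{\dagger_\infty}$. Then both $\Ext_R^{s+1}(M,L)=0$ and $\Ext_R^{s+1}(N,L)=0$, so the displayed isomorphism forces $\Ext_R^{s+1}(M\oplus N,L)=0$, and the criterion from \cite{FLQ}, Proposition 3.1 gives $\wwpd_R(M\oplus N)\leq s$.

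For the reverse inequality $s\leq\wwpd_R(M\oplus N)$, I would again assume the right-hand side is finite, call it $t$, and pick any $L\in\mathcal{P}_{w}^{\dagger_\infty}$. The hypothesis yields $\Ext_R^{t+1}(M\oplus N,L)=0$, and then the same Ext-decomposition forces each of the two direct summands $\Ext_R^{t+1}(M,L)$ and $\Ext_R^{t+1}(N,L)$ to vanish. Applying the criterion once more gives $\wwpd_R M\leq t$ and $\wwpd_R N\leq t$, hence $s\leq t$.

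I do not anticipate any genuine obstacle: the only subtlety is handling the possibility that one (or both) of the dimensions equals $\infty$, which is dispatched by splitting into cases or by reading the argument contrapositively. Everything else is bookkeeping on top of the naturality of $\Ext_R^\ast(-,L)$ with respect to finite direct sums and the Ext-vanishing description of $\wwpd_R$ supplied by \cite{FLQ}.
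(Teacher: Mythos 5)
Your proposal is correct and matches the paper's argument in its essential step: the reverse inequality is proved in exactly the same way, via the additivity of $\Ext_R^{n+1}(-,X)$ over direct sums together with the vanishing criterion of \cite{FLQ}, Proposition 3.1. The only (cosmetic) difference is in the easy inequality $\wwpd_R(M\oplus N)\leq\sup\{\wwpd_R M,\wwpd_R N\}$, where the paper cites closure of the class of weak $w$-projective modules under direct sums (\cite{FLQ}, Proposition 2.5(1)) while you run the same Ext-vanishing criterion in the other direction; both routes are immediate and your version is, if anything, more uniform.
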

\begin{proof} The inequality $\wwpd_R(M\oplus N)\leq\sup\{\wwpd_R M,\wwpd_R N\}$  follows from the fact that the class of weak $w$-projective modules is closed under direct sums by [\cite{FLQ}, Proposition 2.5(1)]. For the converse inequality, we may assume
	that $\wwpd_R(M\oplus N)= n$ is finite. Thus, for any $R$-module $X\in\mathcal{P}_{w}^{\dagger_\infty}$,
	$${\rm Ext}^{n+1}_R(M\oplus N,X)\cong {\rm Ext}^{n+1}_R(M,X)\oplus {\rm Ext}^{n+1}_R( N,X).$$ Since ${\rm Ext}^{n+1}_R(M\oplus N,X)=0$ by [\cite{FLQ}, Proposition 3.1]. Hence, ${\rm Ext}^{n+1}_R(M,X)={\rm Ext}^{n+1}_R(N,X)=0$, which implies that, $\sup\{\wwpd_R M,\wwpd_R N\}\leq n$.
\end{proof}


\begin{thebibliography}{999}\addcontentsline{toc}{section}{\protect\numberline{}{Bibliography}}
		\par\bibitem{FMR} F. A. Almahdi, M. Tamekkante and R. A. K. Assaad, On the right orthogonal complement of the class of w-flat modules, J. Ramanujan Math. Soc. {\bf33} No.2 (2018) 159–175.
		\par\bibitem{HF} H. Kim and F. Wang, On LCM-stable modules, J. Algebra Appl. 13, no. 4 (2014), 1350133, 18 pages.
		\par\bibitem{BHM} B. H. Maddox, Absolutely pure modules, Proc. Amer. Math. Soc. 18 (1967) 155–158.
		\par\bibitem{LMND} L. Mao and N. Ding, FP-projective dimension, Comm. in Algebra. {\bf33} (2005) 1153--1170.
		\par\bibitem{CM} C. Megibben, Absolutely pure modules, Proc. Am. Math. Soc. 26 (1970) 561-566.
		\par\bibitem{AM}  A. Mimouni,   Integral domains in which each ideal is a $w$-ideal, Commun. Algebra {\bf 33} (2005), 1345--1355.
		\par\bibitem{BS} B. Stenstr\"{o}m, Coherent rings and FP-injective modules, J. Lond. Math. Soc. 2(2) (1970) 323–329.
		\par\bibitem{Pu wang} Y. Y. Pu, W. Zhao, G. H. Tang, and F. G. Wang, $w_\infty$-projective modules and Krull domains, Commun. Algebra, Vol. 50, No. 8, (2022), 3390–3402.
		\par\bibitem{MRM} M. Tamekkante, R. A. K. Assaad and
		E. Bouba, Note  On The  DW Rings, Inter. Elec. J. of Algebra. VO. {\bf25} (2019).
		\par\bibitem{TCL} M. Tamekkante, M. Chhiti and K.Louartiti, Weak Projective Modules and Dimension, Int. J. of Algebra. {\bf5} (2011) 1219 -1224.		
		\par\bibitem{WAN} F. Wang, On $w$-projective modules and $w$-flat modules, Algebra Colloq. {\bf4} (1997), no. 1, 111-120.
		\par\bibitem{FW} F. Wang, Finitely presented type modules and w-coherent rings, J. Sichuan Normal Univ. 33 (2010) 1–9.
		\par\bibitem{FH1} F. Wang and H. Kim, Foundations of Commutative Rings and Their Modules, (Springer Nature Singapore Pte Ltd., Singapore, 2016).
		\par\bibitem{FHT} F. Wang and H. Kim, Two generalizations of projective modules and their applications, J. Pure Applied Algebra 219 (2015) 2099-2123.
		\par\bibitem{FH} F. Wang and H. Kim, $w$-injective modules and $w$-semi-hereditary rings, J. Korean Math. Soc. 51 (2014), no. 3, 509–525.
		\par\bibitem{wang and Kim} F. Wang and H. Kim, Relative FP-injective modules and relative IF rings, Commun. Algebra, Vol. 49, (2021), 3552-3582.
		\par\bibitem{FWM2} F. Wang and R. L. McCasland, On $w$-modules over strong Mori domains, Comm. Algebra 25(4), 1285-1306 (1997).
       \par\bibitem{FLQ} F. Wang and L. Qiao, A homological characterization of Krull domains II, Comm. in Algebra. (2019).
       \par\bibitem{FL} F. Wang and L. Qiao, The $w$-weak global dimension of commutative rings, Bull. Korean Math. Soc. 52 (2015), no. 4, 1327–1338.
       \par\bibitem{WLQ} F. Wang and L. Qiao,  A new version of a theorem of Kaplansky. arXiv: 1901.02316.
       \par\bibitem{FCZ} F. G. Wang and D. C. Zhou, A homological characterization of Krull domains, Bull. Korean Math. Soc. 55 (2018), no. 2, 649–657.
       	\par\bibitem {SXFW1} S. Xing and F. Wang, {\em Purity over Pr\"{u}fer v-multiplication domains}, J. of Algebra Appl. Vol. \textbf{16}, No. 5 1850100 (2018).
    	\par\bibitem{HFX} H. Y. Yin, F. G. Wang, X. S. Zhu and Y. H. Chen, $w$-modules over commutative rings, J. Korean. Math. Soc. 48(1) (2011) 207–222.   		
	\end{thebibliography}
\end{document}